\documentclass[final,leqno,onetabnum]{siamltex704}

\usepackage{amsmath,amssymb,exscale,cmmib57} 
\usepackage{amsfonts} 



\usepackage{ifpdf}
\ifpdf 
\usepackage[pdftex]{graphicx}      
\DeclareGraphicsExtensions{.pdf}   
\usepackage[pdftex]{thumbpdf}      
\usepackage[pdftex,                
pdfstartview=FitH,%
bookmarks=true,
bookmarksnumbered=true,
bookmarksopen=true,%
hypertexnames=false,
breaklinks=true,
  colorlinks=true,%
  linkcolor=blue,anchorcolor=blue,%
  citecolor=blue,filecolor=blue,%
  menucolor=blue]%
{hyperref} 
\hypersetup{
pdfauthor   = {ANDREW V. KNYAZEV AND KLAUS NEYMEYR},
pdftitle    = {Preconditioned eigensolver convergence theory in a nutshell},
pdfsubject  = {Paper},
pdfkeywords = { 
Iterative method, 
preconditioning, 
preconditioner,
eigenvalue, 
eigenvector,
Rayleigh quotient, 
convergence theory, 
gradient iteration.},
}
\pdfadjustspacing=1                
\else 
%
\usepackage[dvips]{graphicx}       
\DeclareGraphicsExtensions{.eps}   
\usepackage[ps2pdf]{thumbpdf}      
\usepackage[ps2pdf,                
hypertexnames=false,
breaklinks=true,
  colorlinks=true,%
  linkcolor=blue,anchorcolor=blue,%
  citecolor=blue,filecolor=blue,%
  menucolor=blue]%
{hyperref} 
\hypersetup{
pdfauthor   = {ANDREW V. KNYAZEV AND KLAUS NEYMEYR},
pdftitle    = {Preconditioned eigensolver convergence theory in a nutshell},
pdfsubject  = {Paper},
pdfkeywords = { 
Iterative method, 
preconditioning, 
preconditioner,
eigenvalue, 
eigenvector,
Rayleigh quotient, 
convergence theory,
gradient iteration.},
pdfcreator  = {LaTeX with hyperref package},
pdfproducer = {dvips + ps2pdf}
}
\fi 

\renewcommand{\ldots}{\ensuremath{\dotsc}}

\setcounter{topnumber}{5}
\setcounter{bottomnumber}{5}
\setcounter{totalnumber}{10}

\setlength{\floatsep}{15pt plus 12pt minus 4pt}
\setlength{\textfloatsep}{15pt plus 12pt minus 4pt}

\newcommand{\spanl}{\mathrm{span}}
\newcommand{\pgx}{\phi_\gamma(x)}
\newcommand{\cgx}{C_{\pgx}(Bx)}
\newcommand{\pgxa}{\phi_\gamma(|x|)}
\newcommand{\cgxa}{C^R_{\pgxa}(B|x|)}
\newcommand{\px}{\phi_1(x)}
\newcommand{\cx}{C_{\px}(Bx)}
\newcommand{\igk}{I_\gamma(\kappa)}

\title{
Gradient flow approach to geometric 
convergence analysis 
of preconditioned eigensolvers
\thanks{%
Received by the editors June 16, 2008; revised March 5, 2009; 
accepted March 16, 2009; 
published electronically ???????, 2009. 
Preliminary version http://arxiv.org/abs/0801.3099
}
}
\author{Andrew V. Knyazev\thanks{
Department of Mathematical and Statistical Sciences, 
University Colorado Denver, P.O. Box 173364, Campus Box 170, 
 Denver, CO 80217-3364
(andrew.knyazev at ucdenver.edu, http://math.ucdenver.edu/$\sim$aknyazev/).
Supported by the NSF-DMS 0612751.}
\and  
Klaus Neymeyr\thanks{Universit\"at Rostock, Institut f\"ur Mathematik, 
Universit\"atsplatz 1, 18055 Rostock, Germany
(klaus.neymeyr at mathematik.uni-rostock.de,
http://cat.math.uni-rostock.de/$\sim$neymeyr/). 
}}
\begin{document}

\vspace{-1.2in}

\vspace{.9in}
\setcounter{page}{1}
\maketitle

\begin{abstract}
Preconditioned eigenvalue solvers (eigensolvers) are gaining popularity, 
but their convergence theory remains sparse and complex. 
We consider the simplest preconditioned eigensolver---the gradient iterative method 
with a fixed step size---for symmetric generalized eigenvalue problems, where 
we use the gradient of the  Rayleigh quotient as an optimization direction.  
A sharp convergence rate bound for this method has been obtained 
in 2001--2003. 
It still remains the only known such bound for any of the methods in this class.
While the bound is short and simple, its proof is not.
We extend the bound to Hermitian matrices in the complex space and 
present a new self-contained 
and significantly shorter proof using novel geometric ideas. 
\end{abstract}

\begin{keywords}
{iterative method; continuation method; preconditioning; preconditioner; 
eigenvalue; eigenvector; Rayleigh quotient; gradient iteration; 
convergence theory; spectral equivalence}
\end{keywords}

\begin{AMS}
49M37 
65F15 
65K10 
65N25 
\end{AMS}

(Place for Digital Object Identifier, to get an idea of the final spacing.) 

\pagestyle{myheadings}
\thispagestyle{plain}
\markboth{ANDREW KNYAZEV AND KLAUS NEYMEYR}
{GRADIENT FLOW CONVERGENCE ANALYSIS OF EIGENSOLVERS}
\section{Introduction}
We consider a generalized eigenvalue problem (eigenproblem) for a linear pencil 
$B-\mu A$ with symmetric (Hermitian in the complex case) 
matrices $A$ and $B$ with positive definite $A$. 
The eigenvalues $\mu_i$ are enumerated in decreasing order 
$\mu_1\geq\ldots\geq\mu_{\min}$
and the $x_i$ denote the corresponding eigenvectors.
The largest value of the Rayleigh quotient $\mu(x)={(x,Bx)}/{(x,Ax)},$ 
where  $(\cdot,\cdot)$ denotes the standard scalar product, is 
the largest eigenvalue $\mu_1$. 
It can be approximated iteratively by maximizing the 
Rayleigh quotient in the direction of its gradient, 
which is proportional to $(B-\mu(x)A)x$.
Preconditioning is used to accelerate the convergence;   
see, e.g.,\ \cite{d,KNY1987,KNY1998,k99,KNN2003} and the references therein.
Here we consider the simplest preconditioned eigenvalue solver 
(eigensolver)---the gradient iterative method with an explicit formula for the step size,
cf. \cite{d}, 
one step of which is described by 
\begin{equation}
  \label{e.preeig}
  x'=x+\frac1{\mu(x)-\mu_{\min}} T(Bx-\mu(x) Ax), \quad \mu(x)=\frac{(x,Bx)}{(x,Ax)}.
\end{equation} 
The symmetric (Hermitian in the complex case) positive definite matrix $T$ 
in (\ref{e.preeig}) is called the \emph{preconditioner}. 
Since $A$ and $T$ are  
both positive definite, we assume that
  \begin{equation}
    \label{e.bass}
    (1-\gamma)(z,T^{-1}z)\leq (z,Az) \leq (1+\gamma)(z,T^{-1}z),
                                   \,\forall z, 
\text{ for a given }\gamma\in[0,1).
  \end{equation}
The following result is proved in \cite{KNN2003,NEY2001a,NEY2001b} 
for symmetric matrices in the real space.
\begin{theorem}
  \label{t.1}
If $\mu_{i+1}<\mu(x)\leq\mu_i$ then $\mu(x')\geq\mu(x)$ and 
  \begin{equation}
    \label{e.muest}
    \frac{\mu_i-\mu(x')}{\mu(x')-\mu_{i+1}}
\leq\sigma^2
    \frac{\mu_i-\mu(x)}{\mu(x)-\mu_{i+1}},\quad
\sigma=1 -(1-\gamma)\frac{\mu_i-\mu_{i+1}}{\mu_i-\mu_{\min}}. 
  \end{equation}
The convergence factor $\sigma$ cannot be improved with the chosen terms and assumptions. 
\end{theorem}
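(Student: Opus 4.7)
The plan is to combine a dimensional reduction with a continuation argument that tracks the worst admissible preconditioner as a function of $\gamma$, in the spirit of the ``gradient flow'' announced in the title. Since the map $\mu\mapsto(\mu_i-\mu)/(\mu-\mu_{i+1})$ is strictly decreasing on $(\mu_{i+1},\mu_i]$, the inequality \eqref{e.muest} is equivalent to an explicit lower bound on $\mu(x')$ as $T$ ranges over admissible preconditioners, and it is this dual form I would attack.

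First I would reduce to a three-dimensional subproblem. Fix $x$ and expand $x$ together with the residual $r=Bx-\mu(x)Ax$ in the $A$-orthonormal eigenbasis of the pencil $(B,A)$. Only the coefficients of $x$ and $Tr$ enter $\mu(x')$, and an adversarial $T$ should be expected to deflect the residual away from $x_i$ and toward $x_{\min}$, the direction in which the Rayleigh quotient drops fastest. A Lagrange multiplier analysis at the minimum of $\mu(x')$ subject to \eqref{e.bass} ought to show that the worst-case $x'$ lies in $\mathrm{span}\{x_i,x_{i+1},x_{\min}\}$, which matches exactly the eigenvalues appearing in $\sigma$.

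Second, I would set up the continuation in $\gamma$. At $\gamma=0$ the constraint \eqref{e.bass} forces $T=A^{-1}$ and $x'$ is uniquely determined; a direct computation on the three-dimensional problem should yield \eqref{e.muest} with $\sigma=(\mu_{i+1}-\mu_{\min})/(\mu_i-\mu_{\min})$, which agrees with the claimed formula at $\gamma=0$. For positive $\gamma$ I would parametrize an adversarial path $T(\tau)$, $\tau\in[0,\gamma]$, of admissible preconditioners with $T(0)$ the base case and $T(\gamma)$ a minimizer of $\mu(x'(\gamma))$. Differentiating $\mu(x'(\tau))$ along this path and using \eqref{e.bass} should produce a differential inequality whose integration from $0$ to $\gamma$ contributes precisely the factor $(1-\gamma)$ inside $\sigma$, yielding $\sigma=1-(1-\gamma)(\mu_i-\mu_{i+1})/(\mu_i-\mu_{\min})$. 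The monotonicity $\mu(x')\geq\mu(x)$ should then follow as a byproduct, because it is strict at $\gamma=0$ and the controlled derivative prevents $\mu(x'(\tau))$ from dropping below $\mu(x)$ anywhere along the admissible range.

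The main obstacle is the dimensional reduction together with the identification of the worst-case path $T(\tau)$. Once the worst admissible $x'$ is known to live in the three-eigenvector subspace, the remaining work is a two-parameter ODE computation whose sharp solution recovers $\sigma$, and the same explicit extremizer demonstrates that $\sigma$ cannot be improved. Convincing oneself that no other eigendirections can make matters worse is where I expect the ``novel geometric ideas'' advertised in the abstract to do the essential work.
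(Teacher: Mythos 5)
There is a genuine gap, and it sits exactly where the real difficulty of the theorem lies. Your first step asserts that ``a Lagrange multiplier analysis at the minimum of $\mu(x')$ subject to \eqref{e.bass} ought to show that the worst-case $x'$ lies in $\mathrm{span}\{x_i,x_{i+1},x_{\min}\}$.'' As formulated (fix $x$, minimize over admissible $T$) this is not true: the KKT/geometric analysis for fixed $x$ shows that the minimizing $x'$ is positively proportional to $(B+\alpha I)^{-1}Bx$ for some shift $\alpha$ (a shifted inverse iteration applied to $x$), and this vector has components along \emph{every} eigendirection present in $x$, not just three of them. The low-dimensional reduction is a statement about the worst \emph{initial} vector $x$ on its level set $\{\mu(x)=\kappa\}$, jointly with the worst $T$, and proving it is the entire content of the hard part of the proof: in the paper this is done by enclosing all admissible $x'$ in a circular cone around $Bx$, characterizing the cone minimizer, showing via Temple's inequality that both the gradient norm and the cone aperture are extremal on $\mathrm{span}\{x_i,x_{i+1}\}$, and then integrating the \emph{normalized gradient flow} of the Rayleigh quotient to transfer that pointwise extremality into extremality of the Rayleigh quotient decrease itself. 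Your sketch simply postulates the conclusion of this chain. Moreover the extremal subspace is the two-dimensional $\mathrm{span}\{x_i,x_{i+1}\}$, not a three-dimensional one containing $x_{\min}$: after the shift normalization $\mu_{\min}=0$ the sharp factor is $\sigma=\gamma+(1-\gamma)\mu_{i+1}/\mu_i$, so $\mu_{\min}$ enters \eqref{e.muest} only through the step size $1/(\mu(x)-\mu_{\min})$ in \eqref{e.preeig} (shift invariance), and the intuition that the adversary deflects the residual toward $x_{\min}$ points you at the wrong extremal configuration.

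The second step, the continuation in $\gamma$, is also not a proof as it stands. You claim a differential inequality for $\mu(x'(\tau))$ along an adversarial path $T(\tau)$ integrates to ``precisely the factor $(1-\gamma)$,'' but no such inequality is derived, and obtaining the sharp envelope in $\gamma$ requires already knowing the extremal configuration --- at which point the homotopy is unnecessary. In the paper, once the problem is reduced to $\mathrm{span}\{x_i,x_{i+1}\}$, the extremal iterate is parametrized by the shift $\alpha$ in $(B+\alpha I)w=Bx$; a short computation gives $\sigma(\alpha)=\frac{\mu_{i+1}}{\mu_i}\,\frac{\mu_i+\alpha}{\mu_{i+1}+\alpha}$, which is maximized over admissible data at $\alpha=\mu_{i+1}(1-\gamma)/\gamma$, yielding the stated $\sigma$; no integration in $\gamma$ is involved. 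Two further points you would still owe: monotonicity $\mu(x')\geq\mu(x)$ must be established directly (in the paper it falls out of the cone argument, not of a $\gamma$-derivative bound), and sharpness requires exhibiting an admissible symmetric positive definite $T$ satisfying \eqref{e.bass} that actually realizes the extremal $x'$ (the paper constructs one from a Householder reflection), plus a continuity argument to handle $\gamma=0$, multiple eigenvalues, and the limit $\mu_{\min}\to 0$. Your proposal correctly identifies the base case $\gamma=0$ value of $\sigma$ and the general shape of the argument (reduce dimension, then compute), but the two pillars it rests on --- the dimension reduction and the $\gamma$-continuation --- are respectively misstated and unsubstantiated.
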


Compared to other known non-asymptotic convergence rate bounds 
for similar preconditioned eigensolvers, e.g.,\ 
\cite{bkp96,d,KNY1987,KNY1998}, the advantages of 
\eqref{e.muest} are in its sharpness and elegance. 
Method \eqref{e.preeig} is the easiest preconditioned eigensolver, 
but \eqref{e.muest} still remains the only known sharp bound in these terms 
for any of preconditioned eigensolvers. While bound \eqref{e.muest} 
is short and simple, its proof in \cite{KNN2003} is quite the opposite.
It covers only the real case and is not self-contained---in addition 
it requires most of the material from \cite{NEY2001a,NEY2001b}.
Here we extend the bound to Hermitian matrices 
and give a new much shorter and self-contained proof of Theorem~\ref{t.1},  
which is a great qualitative improvement compared to that of 
\cite{KNN2003,NEY2001a,NEY2001b}. The new proof is not yet as elementary as
we would like it to be; however, it is easy enough to hope that a similar  
approach might be applicable in future work on preconditioned eigensolvers. 

Our new proof is based on novel techniques combined
with some old ideas of \cite{KNY1986,NEY2001a,NEY2001b}. 
We demonstrate that, for a given initial eigenvector approximation $x$, 
the next iterative approximation $x'$ described by \eqref{e.preeig} 
belongs to a cone if we apply any preconditioner satisfying \eqref{e.bass}. 
We analyze a corresponding continuation gradient method 
involving the gradient flow of the Rayleigh quotient 
and show that the smallest gradient norm (evidently leading to the slowest convergence) 
of the continuation method is reached when the initial vector  
belongs to a subspace spanned by two specific eigenvectors, 
namely $x_i$ and $x_{i+1}$. 
This is done by showing that Temple's inequality, which provides a lower bound for the
norm of the gradient $\nabla\mu(x)$, is sharp only in $\spanl\{x_i,x_{i+1}\}$.
Next, we extend by integration the result for the  continuation gradient method 
to our actual fixed step gradient method 
to conclude that the point on the cone, which corresponds to the poorest convergence 
and thus gives the guaranteed convergence rate bound,
belongs to the same two-dimensional invariant subspace $\spanl\{x_i,x_{i+1}\}$.
This reduces the convergence analysis to a two-dimensional case 
for shifted inverse iterations, 
where the sharp convergence rate bound is established.

\section{The proof of Theorem \ref{t.1}}\label{s.geo}
We start with several simplifications: 
\begin{theorem}
\label{t.simp}
We can assume that $\gamma>0$,
$A=I$, $B>0$ is diagonal, eigenvalues are simple, 
$\mu(x)<\mu_i$, and $\mu(x')<\mu_i$ in Theorem \ref{t.1} 
without loss of generality.
\end{theorem}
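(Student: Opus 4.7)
The plan is to handle the six simplifications in turn by standard reductions: first eliminate $A$ by congruence, then $B$ by unitary diagonalization and a shift, and finally dispose of the conditions on $\gamma$, the simplicity of eigenvalues, and the strict inequalities $\mu(x)<\mu_i$, $\mu(x')<\mu_i$ by continuity and triviality arguments.

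First I would reduce to $A=I$ by the congruence $y=A^{1/2}x$. A direct computation gives $\mu(y)=(y,\tilde B y)/(y,y)$ with $\tilde B=A^{-1/2}BA^{-1/2}$, and shows that \eqref{e.preeig} for $x$ is equivalent to the analogous iteration for $y$ with preconditioner $\tilde T=A^{1/2}TA^{1/2}$. Substituting $z=A^{-1/2}w$ in \eqref{e.bass} yields $(1-\gamma)(w,\tilde T^{-1}w)\le (w,w)\le (1+\gamma)(w,\tilde T^{-1}w)$, the same spectral equivalence bound with $A$ replaced by $I$, the same $\gamma$, and $\tilde T$ in place of $T$; all eigenvalues $\mu_i$, and hence all ratios in \eqref{e.muest}, are preserved. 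Next, I would diagonalize the Hermitian matrix $B$ by a unitary similarity $B=UDU^*$, a unitary change of variables that preserves the standard scalar product and the form of \eqref{e.preeig}, \eqref{e.bass}, \eqref{e.muest}. To make the diagonal $B$ positive definite, I would replace it by $B+cI$ with $c>-\mu_{\min}$; the gradient $Bx-\mu(x)x$ is invariant under the simultaneous shift $B\mapsto B+cI$, $\mu(x)\mapsto\mu(x)+c$, and so is $\mu(x)-\mu_{\min}$, so \eqref{e.preeig} and the eigenvalue differences in \eqref{e.muest} are unchanged.

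For the remaining hypotheses I would argue by continuity. If \eqref{e.bass} holds with $\gamma=0$, it also holds with every $\gamma'>0$; applying the theorem with $\gamma'$ and letting $\gamma'\to 0^+$ gives the bound with $\gamma=0$, since $\sigma$ is continuous in $\gamma$. Simple eigenvalues are dense among positive diagonal matrices; a small perturbation $B_\varepsilon$ of $B$ has simple spectrum, and for $\varepsilon$ small enough the hypothesis $\mu_{i+1,\varepsilon}<\mu(x)\le\mu_{i,\varepsilon}$ still holds, because $\mu(x)$ is fixed and the $\mu_{j,\varepsilon}$ depend continuously on $\varepsilon$. Assuming the theorem for $B_\varepsilon$ and letting $\varepsilon\to 0$ in both sides of \eqref{e.muest} and in \eqref{e.preeig} yields the bound for $B$.

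Finally, the strict inequalities $\mu(x)<\mu_i$ and $\mu(x')<\mu_i$ are non-binding: if $\mu(x')\ge\mu_i$, the left-hand side of \eqref{e.muest} is non-positive while the right-hand side is nonnegative, so the bound is trivial; and the boundary case $\mu(x)=\mu_i$ is recovered by continuity from the open regime $\mu(x)<\mu_i$, since \eqref{e.preeig} depends continuously on $x$ on the open set where $\mu(x)>\mu_{\min}$ and both sides of \eqref{e.muest} are continuous in $\mu(x)$. The main obstacle is only careful bookkeeping in these continuity and perturbation arguments—in particular, confirming that the hypothesis $\mu_{i+1}<\mu(x)\le\mu_i$ is stable under small perturbations of $B$ and of $\gamma$, so that the spectral gap $\mu_i-\mu_{i+1}$ in the denominator of $\sigma$ does not collapse and the iteration \eqref{e.preeig} remains well defined along the perturbation.
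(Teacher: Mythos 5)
Your reduction follows essentially the same route as the paper's proof of Theorem \ref{t.simp}: exact algebraic equivalences (a change of variables making $A=I$ and $B$ diagonal, a spectral shift making $B>0$), continuity/limiting arguments for $\gamma>0$ and for simple eigenvalues, and triviality of the boundary cases $\mu(x)=\mu_i$ and $\mu(x')\ge\mu_i$; your minor variants (monotonicity of \eqref{e.bass} in $\gamma$ instead of continuity of $x'$ in $T$, and a perturbation of $x$ for the case $\mu(x)=\mu_i$ instead of invoking $\mu(x')\ge\mu(x)$ directly) are sound. Two points the paper attends to and you gloss over: first, the perturbation to simple eigenvalues should either be one-sided (the paper increases the diagonal entries of $B$, so $\mu(x)\le\mu_{i,\varepsilon}$ is automatic) or be carried out after the reduction to the strict inequality $\mu(x)<\mu_i$, which you only half-acknowledge as ``bookkeeping''; second, and more substantively, Theorem \ref{t.1} also asserts that the factor $\sigma$ cannot be improved, so a genuine ``without loss of generality'' must transfer sharpness back through the non-exact, limiting steps (the perturbation to simple eigenvalues with $\mu_{\min}\to0$, and the case $\gamma=0$) --- the paper explicitly flags this (``show that the bound is sharp as $0<\mu_{\min}\to0$ with $\epsilon\to0$'') and discharges it later in Theorem \ref{t.2D}, whereas your proposal omits this direction entirely.
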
\begin{proof}
First, we observe that method \eqref{e.preeig}
and bound \eqref{e.muest} are evidently both invariant with respect to a real shift 
$s$ if we replace the matrix $B$ with $B+sA$, so without loss of generality we need only 
consider the case  $\mu_{\min}=0$ which makes $B\geq0.$ 
Second, by changing the basis from coordinate vectors to the eigenvectors 
of $A^{-1}B$ we can make $B$ diagonal and $A=I$.
Third, having $\mu(x')\geq\mu(x)$ 
if $\mu(x)=\mu_i$ or $\mu(x')\geq\mu_i$, or both, 
bound \eqref{e.muest} becomes trivial.
The assumption  $\gamma>0$ is a bit more delicate. The vector  $x'$ 
depends continuously on the preconditioner $T$, so we can assume that $\gamma>0$ and extend 
the final bound to the case $\gamma=0$ by continuity. 

Finally, we again use continuity to explain why we can assume that 
all eigenvalues (in fact, we only need $\mu_i$ and $\mu_{i+1}$)
are simple and make $\mu_{\min}>0$ and thus $B>0$ 
without changing anything.  
Let us list all $B$-dependent terms, in addition to all participating eigenvalues,
in method (\ref{e.ep}): $\mu(x)$ and $x'$;  
and in bound (\ref{e.muest}): $\mu(x)$ and $\mu(x')$.
All these terms depend on $B$ continuously if $B$ 
is slightly perturbed into  $B_\epsilon$ with some $\epsilon\to0$, so we  
increase arbitrarily small the diagonal entries of the matrix $B$ 
to make all eigenvalues of $B_\epsilon$ simple and  
$\mu_{\min}>0$. 
If we prove bound \eqref{e.muest} for the matrix $B_\epsilon$ 
with simple positive eigenvalues,
and show that the bound is sharp as $0<\mu_{\min}\to0$ with  $\epsilon\to0$, 
we take the limit $\epsilon\to0$ and by continuity extend the result to the 
limit matrix $B\geq0$ with $\mu_{\min}=0$ and possibly multiple eigenvalues. 
\end{proof}

It is convenient to rewrite \eqref{e.preeig}--\eqref{e.muest} 
equivalently by Theorem \ref{t.simp} as follows%
\footnote{Here and below $\|\cdot\|$ denotes the Euclidean vector norm, i.e.,\ 
$\|x\|^2=(x,x)=x^Hx$ for a real or complex column-vector $x$,
as well as the corresponding induced matrix norm.}  
\begin{align} 
\mu(x)x' = Bx-(I-T)(Bx-\mu(x)x), \quad \mu(x)=\frac{(x,Bx)}{(x,x)},\label{e.ep} 
\\
\|I-T\|\leq\gamma, \quad 0<\gamma<1;\label{e.ass} 
\end{align}
and if $\mu_{i+1}<\mu(x)<\mu_i$ and $\mu(x')<\mu_i$  then $\mu(x')\geq\mu(x)$ and 
  \begin{equation}
    \label{e.muestn}
    \frac{\mu_i-\mu(x')}{\mu(x')-\mu_{i+1}}
\leq\sigma^2
    \frac{\mu_i-\mu(x)}{\mu(x)-\mu_{i+1}},\quad
\sigma=1 -(1-\gamma)\frac{\mu_i-\mu_{i+1}}{\mu_i}=
\gamma+(1-\gamma)\frac{\mu_{i+1}}{\mu_i}. 
  \end{equation}
Now we establish the validity and sharpness
of bound \eqref{e.muestn} assuming \eqref{e.ep} and \eqref{e.ass}. 

\begin{theorem}\label{t.cone}
Let us define%
\footnote{We define angles in $[0,\pi/2]$ between vectors 
by $\cos\angle\{x,y\}=
|(x,y)|/(\|x\|\|y\|)$.} 
$\pgx=\arcsin\left(\gamma{\|Bx-\mu(x)x\|}/{\|Bx\|}\right),$ then  
$\pgx<\pi/2$ and $\angle\{x',Bx\}\leq\pgx$. 
Let $w\neq0$ be defined as the vector constrained by  $\angle\{w,Bx\}\leq\pgx$ and with the 
smallest value $\mu(w)$. Then $\mu(x')\geq\mu(w)>\mu(x)$.
\end{theorem}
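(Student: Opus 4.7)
The plan is to verify in order: the inequality $\pgx<\pi/2$, the angle bound $\angle\{x',Bx\}\le\pgx$, and the strict inequality $\mu(w)>\mu(x)$ for the cone-minimizer $w$. The first two are direct; the third is the crux.

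Writing $r:=Bx-\mu(x)x$ (so $r\perp x$ by definition of $\mu(x)$), Pythagoras yields $\|Bx\|^2=\mu(x)^2\|x\|^2+\|r\|^2$; since $B>0$ by Theorem~\ref{t.simp} makes $\mu(x)>0$, $\|r\|<\|Bx\|$ strictly, hence $\sin\pgx=\gamma\|r\|/\|Bx\|<\gamma<1$, giving $\pgx<\pi/2$. For the angle bound, rewrite~\eqref{e.ep} as $\mu(x)x'=Bx-e$ with $e:=(I-T)r$ satisfying $\|e\|\le\gamma\|r\|$ by~\eqref{e.ass}. Then $\mu(x)x'$ lies in the closed ball of radius $\gamma\|r\|$ centered at $Bx$; since $\|Bx\|>\gamma\|r\|$ this ball excludes the origin, and the largest angle any of its points subtends at the origin equals $\arcsin(\gamma\|r\|/\|Bx\|)=\pgx$. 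Because $\mu(x)>0$, $x'$ makes the same angle with $Bx$ as $\mu(x)x'$ does.

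Existence of the minimizing $w$ follows from scale-invariance of both $\mu$ and the cone $C:=\{v\ne 0:\angle\{v,Bx\}\le\pgx\}$: minimize $\mu$ on the compact spherical cap $C\cap\{\|v\|=1\}$, and $x'\in C$ yields $\mu(x')\ge\mu(w)$. To prove $\mu(w)>\mu(x)$, the key lemma is: for every $v$ with $\mu(v)=\mu(x)$ and $\|v\|=\|x\|=1$, $|(v,Bx)|\le|(x,Bx)|$, equivalently $\angle\{v,Bx\}\ge\angle\{x,Bx\}$. In the eigenbasis of $B$ (where $\sqrt{\mu_j}$ is real since $B>0$), a single Cauchy--Schwarz delivers this:
\begin{equation*}
|(v,Bx)|^2=\Bigl|\sum_j(\bar v_j\sqrt{\mu_j})(\sqrt{\mu_j}\,x_j)\Bigr|^2\le\Bigl(\sum_j|v_j|^2\mu_j\Bigr)\Bigl(\sum_j\mu_j|x_j|^2\Bigr)=\mu(v)\mu(x)=\mu(x)^2=|(x,Bx)|^2.
\end{equation*}
Since $\sin\angle\{x,Bx\}=\|r\|/\|Bx\|>0$ and $\gamma<1$ together give $\pgx<\angle\{x,Bx\}$ strictly, no $v$ with $\mu(v)=\mu(x)$ lies in $C$. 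Moreover $Bx\in C$, and an analogous Cauchy--Schwarz (the classical power-iteration inequality) yields $\mu(Bx)>\mu(x)$ strictly, since $x$ is not an eigenvector. As $C\setminus\{0\}$ is connected and the continuous function $\mu$ omits the value $\mu(x)$ there, $\mu>\mu(x)$ throughout $C$; in particular $\mu(w)>\mu(x)$.

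The main obstacle is spotting the extremal characterization that makes the third part work: among all vectors on the Rayleigh-quotient level set $\{\mu=\mu(x)\}$, the vector $x$ itself is the one closest to $Bx$ in angle. Once this is identified and reduced to a single application of Cauchy--Schwarz, the connectedness argument coupled with $\mu(Bx)>\mu(x)$ closes the proof quickly, and the whole argument is intrinsically insensitive to whether the underlying field is $\mathbb{R}$ or $\mathbb{C}$.
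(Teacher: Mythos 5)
Your proofs of $\pgx<\pi/2$ and of $\angle\{x',Bx\}\le\pgx$ coincide with the paper's (same Pythagorean identity for $Bx-\mu(x)x\perp x$, same tangent-ball bound at $Bx$). For the claim $\mu(w)>\mu(x)$ you take a different, and noticeably longer, route built on the same key inequality. The paper's argument applies the $B$-Cauchy--Schwarz inequality directly to the minimizer $w$: since $w$ lies in the cone and $\gamma<1$, one has $\cos\angle\{w,Bx\}>\cos\px=(x,Bx)/(\|x\|\|Bx\|)$, while $|(w,Bx)|\le(w,Bw)^{1/2}(x,Bx)^{1/2}$; combining the two gives $\sqrt{\mu(x)}<\sqrt{\mu(w)}$ in one line. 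Your key lemma is exactly this Cauchy--Schwarz inequality, but applied to vectors $v$ on the level set $\{\mu=\mu(x)\}$ to show the level set misses the cone, after which you need extra scaffolding --- the power-iteration inequality $\mu(Bx)>\mu(x)$, connectedness of the cone minus the origin, and an intermediate-value argument --- to upgrade ``$\mu$ omits the value $\mu(x)$ on the cone'' to ``$\mu>\mu(x)$ on the cone.'' That reasoning is correct, with one small imprecision: under the paper's angle convention $\cos\angle\{y,Bx\}=|(y,Bx)|/(\|y\|\|Bx\|)$ the set $C$ is a double cone, so in the real case $C\setminus\{0\}$ has two connected components; your argument still closes because $\mu(-v)=\mu(v)$ and the components contain $Bx$ and $-Bx$ respectively, but the connectedness claim as stated is literally true only in the complex case (where phase rotations join $v$ to $-v$ inside $C$). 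What your detour buys is an appealing geometric picture (on its own level set, $x$ is the vector angularly closest to $Bx$); what the paper's direct use of the same inequality buys is brevity and no topological hypotheses at all.
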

\begin{proof}
Orthogonality $(x,Bx-\mu(x)x)=0$ by the Pythagorean theorem implies   
$\|Bx\|^2=\|\mu(x)x\|^2+\|Bx-\mu(x)x\|^2$,
so $\|Bx-\mu(x)x\|<\|Bx\|$, since $\mu(x)>0$ as $B>0$, 
and $\sin\angle\{x,Bx\}=\sin\px={\|Bx-\mu(x)x\|}/{\|Bx\|}<1,$
where $Bx\neq0$ as $B>0$. 
A ball with the radius $\gamma\|Bx-\mu(x)x\|\geq\|I-T\|\|Bx-\mu(x)x\|$ 
by (\ref{e.ass}) centered at $Bx$ contains  $\mu(x)x'$ by \eqref{e.ep},
so $\sin\angle\{x',Bx\}\leq \gamma\|Bx-\mu(x)x\|/\|Bx\|<\gamma<1$. 

The statement  $\mu(x')\geq\mu(w)$ follows directly from the definition of $w$. 
Now, 
\[
0<\frac{(x,Bx)}{\|x\|\|Bx\|}=\cos\px<\cos\angle\{w,Bx\}=\frac{|(w,Bx)|}{\|w\|\|Bx\|}\leq
\frac{(w,Bw)^{1/2}(x,Bx)^{1/2}}{\|w\|\|Bx\|}
\]
as $B>0$, so $\sqrt{\mu(x)}<\sqrt{\mu(w)}$ and $\mu(x)<\mu(w).$
\end{proof}

We denote by $\cgx:=\{y: \angle\{y,Bx\}\leq\pgx\}$ the circular cone
around $Bx$ with the opening angle $\pgx$. 
Theorem \ref{t.cone} replaces  $x'$  with the 
minimizer $w$ of the Rayleigh quotient on the cone $\cgx$ in the rest of the paper, 
except at the end of the proof of Theorem  \ref{t.2D}, where we 
show that bounding below the value $\mu(w)$ instead of 
$\mu(x')$ still gives the sharp estimate. 

Later on, in the proof of Theorem \ref{t.wcurve}, we use an argument that holds 
easily only in the real space, so we need the following last simplification.
\begin{theorem}\label{t.real}
Without loss of generality we can consider only the real case. 
\end{theorem}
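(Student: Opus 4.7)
The plan is to reduce the Hermitian case to the real symmetric case by exploiting the simplified setup of Theorem \ref{t.simp}, in which $B$ is diagonal with positive real entries. The key observation is that the componentwise absolute value map $y\mapsto|y|$ preserves the Rayleigh quotient and shrinks angles measured against $Bx$, so it turns any complex feasible vector into a real one with the same Rayleigh quotient.

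First, I would record two elementary identities valid for any complex vector $y$: since $B$ is real diagonal and positive,
\[
(|y|,B|y|)=\sum_j B_{jj}|y_j|^2=(y,By),\quad \||y|\|=\|y\|,\quad \|B|y|-\mu(|y|)|y|\|^2=\sum_j(B_{jj}-\mu(y))^2|y_j|^2=\|By-\mu(y)y\|^2,
\]
so that $\mu(|y|)=\mu(y)$. Applied to $y=x$, these identities give $\mu(x)=\mu(|x|)$ and $\pgx=\pgxa$, identifying the opening angles of the complex cone $\cgx$ around $Bx$ and the real cone $\cgxa$ around $B|x|$.

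Next, I would show that the map $|\cdot|$ sends $\cgx$ into $\cgxa$. For any $w\in\cgx$, the triangle inequality gives $|(w,Bx)|=\bigl|\sum_j B_{jj}\bar w_j x_j\bigr|\leq\sum_j B_{jj}|w_j||x_j|=(|w|,B|x|)$, and combined with $\||w|\|=\|w\|$ and $\|B|x|\|=\|Bx\|$ this yields $\cos\angle\{|w|,B|x|\}\geq\cos\angle\{w,Bx\}$, hence $|w|\in\cgxa$. Since $\mu(|w|)=\mu(w)$, we conclude
\[
\min_{w\in\cgx}\mu(w)\;\geq\;\min_{\tilde w\in\cgxa}\mu(\tilde w).
\]

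Combined with Theorem \ref{t.cone}, which bounds $\mu(x')$ from below by the Rayleigh quotient minimum on $\cgx$, this reduces the task to establishing \eqref{e.muestn} with $\mu(x')$ replaced by $\min_{\tilde w\in\cgxa}\mu(\tilde w)$, a problem entirely in real Euclidean space. Sharpness of the complex bound follows automatically from sharpness of the real bound, since every real instance is also a complex one. I do not foresee any serious obstacle here; the one mildly delicate point is that the reduction replaces the iterate $x'$ with a cone minimizer, but this is precisely the object already prepared for us by Theorem \ref{t.cone}.
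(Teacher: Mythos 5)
Your proposal is correct and follows essentially the same route as the paper's own proof: the identities $\mu(|y|)=\mu(y)$, $\|B|y|\|=\|By\|$, $\|B|y|-\mu(|y|)|y|\|=\|By-\mu(y)y\|$ giving $\pgx=\pgxa$, the triangle-inequality inclusion $\left|\cgx\right|\subseteq\cgxa$, and the resulting comparison $\min_{y\in\cgx}\mu(y)\geq\min_{\tilde w\in\cgxa}\mu(\tilde w)$ with $\mu(x)=\mu(|x|)$ unchanged. The only difference is cosmetic: you dispatch sharpness by noting that real instances are admissible complex instances (with a real symmetric $T$ being Hermitian), which is valid, whereas the paper defers this point to the end of the proof of Theorem~\ref{t.2D}, where it additionally checks that the real cone minimizer remains a minimizer over the complex cone.
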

\begin{proof}
The key observation is that for our positive diagonal matrix $B$ the Rayleigh quotient 
depends evidently only on the absolute values of the vector components, i.e., 
$\mu(x)=\mu(|x|)$, where the absolute value operation is applied component-wise. 
Moreover, $\|Bx-\mu(x)x\|=\|B|x|-\mu(|x|)|x|\|$ and $\|Bx\|=\|B|x|\|$, so 
$\pgx=\pgxa$. The cone $\cgx$ lives in the complex space, 
but we also need its substitute in the real space.  
Let us introduce the notation  $\cgxa$ for  
the {\em real} circular cone with the  
opening angle $\pgxa$ centered at the {\em real} vector $B|x|.$
Next we show that in the real space we have the inclusion 
$\left|\cgx\right|\subseteq\cgxa$. 

For any complex nonzero vectors $x$ and $y$, we have  
$|(y,Bx)|\leq(|y|,B|x|)$ by the triangle inequality, thus 
$\angle\{|y|,B|x|\}\leq\angle\{y,Bx\}$. 
If $y\in\cgx$ then $\angle\{|y|,B|x|\}\leq\angle\{y,Bx\}\leq\pgx=\pgxa$,
i.e.,\ indeed, $|y|\in\cgxa$, which means that $\left|\cgx\right|\subseteq\cgxa$ 
as required.

Therefore, changing the given vector $x$ to take its absolute value $|x|$ and replacing 
the complex cone $\cgx$ with the real cone $\cgxa$ lead to the relations 
$\min_{y\in\cgx}\mu(y)=\min_{|y|\in\left|\cgx\right|}\mu(|y|)\geq
\min_{|y|\in\cgxa}\mu(|y|)$, 
but does not affect 
the starting Rayleigh quotient $\mu(x)=\mu(|x|).$
This proves the theorem
with the exception of the issue of whether the sharpness
in the real case implies the sharpness in the complex case;
see the end of the proof of Theorem \ref{t.2D}.
\end{proof}
\begin{theorem}
\label{t.wcurve}
We have $w\in\partial\cgx$ and 
$\exists\,\alpha=\alpha_\gamma(x)>-\mu_i$ such that  
$(B+\alpha I)w=Bx$.
The inclusion $x\in\spanl\{x_i,x_{i+1}\}$ implies $w\in\spanl\{x_i,x_{i+1}\}.$ 
\end{theorem}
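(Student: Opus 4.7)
I would regard the cone minimizer $w$ of Theorem \ref{t.cone} as a constrained optimization point and extract the three assertions of Theorem \ref{t.wcurve} from KKT conditions. Since $\mu$ and the cone $\cgx$ are scale-invariant, I normalize $\|w\| = 1$; working in the real case by Theorem \ref{t.real} and choosing the sign of $w$ so that $(w, Bx) \geq 0$, the minimization becomes $\min_w (w, Bw)$ subject to $(w, w) = 1$ and $(w, Bx) \geq \cos\pgx \cdot \|Bx\|$. The KKT system reads $Bw = \lambda w + \tfrac{\eta}{2} Bx$ with $\eta \geq 0$ and complementary slackness on the cone constraint.

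To establish $w \in \partial\cgx$, I rule out $\eta = 0$: in that case $w$ would be an eigenvector $x_k$ with $\mu(w) = \mu_k$. But Theorem \ref{t.cone} forces $\mu(w) > \mu(x) > \mu_{i+1}$, while the hypothesis $\mu(x') < \mu_i$ of Theorem \ref{t.1} combined with $\mu(w) \leq \mu(x')$ gives $\mu(w) < \mu_i$. Since the eigenvalues are simple by Theorem \ref{t.simp}, no $\mu_k$ lies in the open interval $(\mu_{i+1}, \mu_i)$, a contradiction. Hence $\eta > 0$ and the cone constraint is active.

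Rescaling $w$ by $2/\eta$ converts the KKT relation into $(B + \alpha I) w = Bx$ with $\alpha := -\lambda$. The bound $\alpha > -\mu_i$ I would get by taking the inner product of the unrescaled KKT relation with the unit vector $w$:
\[
\mu(w) - \lambda = \tfrac{\eta}{2}(w, Bx) = \tfrac{\eta}{2}\cos\pgx\cdot\|Bx\| > 0,
\]
so $\lambda < \mu(w) \leq \mu(x') < \mu_i$, i.e., $\alpha > -\mu_i$.

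For the span claim, assume $x \in V := \spanl\{x_i, x_{i+1}\}$, so that $Bx \in V$ by $B$-invariance. Splitting $w = w_V + w_{V^\perp}$ and projecting the pencil equation onto $V^\perp$ gives $(B + \alpha I) w_{V^\perp} = 0$. Since $\alpha > -\mu_i$, every coefficient $\mu_k + \alpha$ with $k \leq i-1$ is strictly positive, so a non-trivial $w_{V^\perp}$ would require $-\alpha = \mu_k$ for some $k \geq i + 2$. In the generic non-degenerate case, $B + \alpha I$ is invertible and $w = (B + \alpha I)^{-1} Bx \in V$ follows from the $B$-invariance of $V$. The main obstacle is the degenerate case $-\alpha = \mu_k$ with $k \geq i + 2$, where the solution set of the pencil equation acquires a one-dimensional fibre along $\ker(B+\alpha I)|_{V^\perp}$; I expect this to be handled by the perturbation/continuity device already employed in the proof of Theorem \ref{t.simp} — perturb $B$ slightly to break the degeneracy, apply the generic case to force $w \in V$, and pass to the limit.
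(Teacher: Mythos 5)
Your KKT treatment of the first two assertions is sound and is essentially the paper's own argument in a different dress: the paper scales $w$ so that $Bx-w$ is the inward normal of the cone at $w$ and writes $\beta(Bx-w)=Bw-\mu(w)w$ with $\beta>0$, which is your multiplier relation after rescaling; your exclusion of $\eta=0$ (no eigenvalue lies in $(\mu_{i+1},\mu_i)$ because $\mu_{i+1}<\mu(x)<\mu(w)\le\mu(x')<\mu_i$) and your inner-product step giving $\lambda<\mu(w)<\mu_i$, i.e.\ $\alpha>-\mu_i$, correspond to the paper's $\beta\neq0$ and $\alpha=\beta-\mu(w)>-\mu(w)>-\mu_i$. (A minor point: to invoke KKT you should note that when the cone constraint is active, $w$ is not parallel to $Bx$, so the gradients of the two constraints are independent; this is on the same level of informality as the paper's geometric argument.)

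The genuine gap is exactly where you flag it: the degenerate branch $-\alpha=\mu_k$, $k\ge i+2$, of the span claim. Deferring it to the perturbation/continuity device of Theorem \ref{t.simp} does not work as stated, for two reasons. First, $\alpha$ is a Lagrange multiplier defined implicitly by the minimization, so perturbing $B$ moves $\alpha$ together with the eigenvalues; nothing guarantees that the coincidence $-\alpha_\epsilon=\mu_k^{(\epsilon)}$ is avoided for small $\epsilon$, and since the theorem must hold for \emph{every} admissible $B$, genericity in $B$ is not enough. Second, even granting non-degeneracy of the perturbed problems, a limit of perturbed minimizers only produces \emph{some} minimizer of the original problem lying in $\spanl\{x_i,x_{i+1}\}$; it does not exclude a minimizer $w$ outside this subspace in the degenerate case, which is what the statement asserts (and what the proof of Theorem \ref{t.2Dred} uses, since the gradient flow is launched from the minimizer and must remain in the invariant subspace). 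The paper closes this case directly, with no perturbation: writing $x=c_ix_i+c_{i+1}x_{i+1}$ with $c_i,c_{i+1}\neq0$, the equation $(B-\mu_j I)w=Bx$ forces $w=a_ic_ix_i+a_{i+1}c_{i+1}x_{i+1}+c_jx_j$ with $a_k=\mu_k/(\mu_k-\mu_j)$ and $0<a_i<a_{i+1}$ (since $0<\mu_j<\mu_{i+1}$), so the relative weights of $x_{i+1}$ and $x_j$ only grow, and monotonicity of the Rayleigh quotient in the absolute values of the expansion coefficients gives $\mu(w)\le\mu(a_ic_ix_i+a_{i+1}c_{i+1}x_{i+1})<\mu(x)$, contradicting $\mu(w)>\mu(x)$ from Theorem \ref{t.cone}. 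Replace your perturbation sketch by this (or an equivalent) direct exclusion of the degenerate case; the remainder of your proof then stands.
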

\begin{proof}
\begin{figure}
\begin{center}
\includegraphics[totalheight=0.2\textheight]{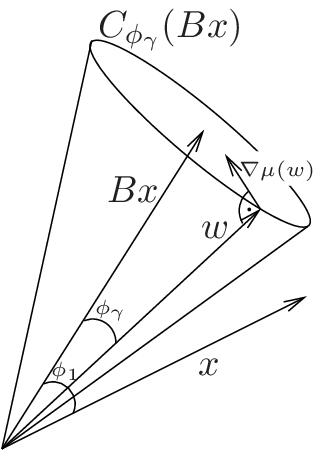}
\end{center}
\caption{\em The cone $\cgx$.}
\label{f1}
\end{figure}
Assuming that $w$ is strictly inside the cone $\cgx$ 
implies that $w$ is a point of a local minimum of the Rayleigh quotient. 
The Rayleigh quotient has only one local (and global) minimum,  
$\mu_{\min}$, but the possibility 
$\mu(w)=\mu_{\min}$ is eliminated by Theorem \ref{t.cone},  
so we obtain a contradiction, thus $w\in\partial\cgx$. 
 
The necessary condition 
for a local minimum of a smooth real-valued function on a smooth 
surface in a real vector space is that the gradient of the function is orthogonal to 
the surface at the point of the minimum and directed inwards. 
In our case, $\cgx$ is a circular cone  with the axis $Bx$   
and the gradient $\nabla\mu(w)$ is positively proportional 
to $Bw-\mu(w)w$; see Figure \ref{f1}.
We 
first scale the vector $w$ such that $(Bx-w,w)=0$ so that the 
vector $Bx-w$ is an inward normal vector for 
$\partial\cgx$ at the point $w$. This inward normal vector must be 
positively proportional to the gradient, 
$\beta(Bx-w)=Bw-\mu(w)w$ with $\beta>0$, which gives   
$(B+\alpha I)w=\beta Bx$, 
where $\alpha=\beta-\mu(w)>-\mu(w)>-\mu_i$.
Here $\beta\neq0$ as otherwise $w$ would be an eigenvector, but 
$\mu(x)<\mu(w)<\mu(x')$ by Theorem \ref{t.cone}, where by assumptions $\mu_{i+1}<\mu(x)$, 
while $\mu(x')<\mu_i$ by Theorem \ref{t.simp}, which gives a contradiction. 
As the scaling of the minimizer is irrelevant, we denote $w/\beta$ here by $w$ 
with a slight local notation abuse. 

Finally, since $(B+\alpha I)w=Bx$, inclusion 
$x\in\spanl\{x_i,x_{i+1}\}$ gives either the required inclusion 
$w\in\spanl\{x_i,x_{i+1}\}$ or 
$w\in\spanl\{x_i,x_{i+1},x_j\}$ with $\alpha=-\mu_j$ for some 
$j\neq i$ and $j\neq i+1.$ We now show that the latter leads to a contradiction. 
We have just proved that $\alpha>-\mu_i$, thus $j>i+1$. 
Let $x=c_i x_i+c_{i+1}x_{i+1}$, 
where we notice that $c_{i}\neq0$ and $c_{i+1}\neq0$ since 
 $x$ is not an eigenvector.
Then we obtain $w=a_i c_i x_i+a_{i+1}c_{i+1}x_{i+1}+c_j x_j$ where 
$(B-\mu_j)w=Bx$, therefore $a_k=\mu_k/(\mu_k-\mu_j),\,k=i,i+1$. 
Since all eigenvalues are simple, $\mu_{i+1}\neq\mu_j.$ 
We observe that $0<a_i<a_{i+1}$, i.e.,\ 
in the mapping of $x$ to $w$ the coefficient 
in front of $x_i$ changes by a smaller absolute value compared to the change in the 
coefficient in front of $x_{i+1}$. Thus, 
$\mu(x)>\mu(a_i c_i x_i+a_{i+1}c_{i+1}x_{i+1})\geq\mu(w)$
using the monotonicity of the Rayleigh quotient in the absolute 
values of the coefficients of the eigenvector expansion of its argument,
which contradicts $\mu(w)>\mu(x)$ proved in Theorem \ref{t.cone}.
\end{proof}

Theorem \ref{t.wcurve} characterizes the minimizer $w$ of the Rayleigh quotient 
on the cone $\cgx$ for a fixed $x$.
The next goal is to vary $x$, preserving its Rayleigh quotient $\mu(x)$, 
and to determine conditions on $x$ leading to  
the smallest $\mu(w)$ in such a setting. Intuition suggests 
(and we give the exact formulation and 
the proof later in Theorem~\ref{t.2Dred}) that the 
poorest convergence of a gradient method corresponds to the 
smallest norm of the gradient, so in the next theorem 
we analyze the behavior of the gradient $\|\nabla \mu(x)\|$
of the  Rayleigh quotient and the cone opening angle $\pgx.$
\begin{theorem}
\label{t.2Dgrad}
Let $\kappa\in(\mu_{i+1},\mu_i)$ be fixed and   
the level set of the Rayleigh quotient be denoted by 
${\mathcal  L}(\kappa):=\{x\neq0: \mu(x)=\kappa\}$ . 
Both $\|\nabla \mu(x)\|\|x\|$ and $\px-\pgx$ with $0<\gamma<1$
attain their minima on $x\in{\mathcal  L}(\kappa)$ in $\spanl\{x_i,x_{i+1}\}.$
\end{theorem}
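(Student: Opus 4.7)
The plan is to reduce both minimizations to a single linear optimization in the squared eigenexpansion coefficients of $x$, and then apply a classical Temple-type inequality. By Theorems~\ref{t.simp} and~\ref{t.real}, I may work in the real case with $B$ diagonal with simple positive entries $\mu_j$. Expanding $x = \sum_j c_j x_j$ and setting $t_j = c_j^2/\|x\|^2 \geq 0$, I get $\sum_j t_j = 1$, while the level-set condition $x\in{\mathcal L}(\kappa)$ becomes $\sum_j t_j\mu_j = \kappa$. Using $\nabla\mu(x) = 2(Bx-\mu(x)x)/\|x\|^2$ and $\sin^2\px = \|Bx-\mu(x)x\|^2/\|Bx\|^2$, a direct calculation on ${\mathcal L}(\kappa)$ yields
\begin{equation*}
\tfrac14\|\nabla\mu(x)\|^2\|x\|^2 = \sum_j t_j(\mu_j-\kappa)^2 = S - \kappa^2,
\qquad
\sin^2\px = 1 - \frac{\kappa^2}{S},
\quad S := \sum_j t_j\mu_j^2,
\end{equation*}
so both quantities are strictly monotone functions of the single linear functional $S$.

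Next, I would remove the $\gamma$-dependence of $\px - \pgx$. Writing $s = \sin\px \in [0,1)$ (the strict bound $s<1$ follows from $B>0$, as in the proof of Theorem~\ref{t.cone}), the definition $\pgx = \arcsin(\gamma\sin\px)$ gives $\px-\pgx = f(s)$ with $f(s) = \arcsin s - \arcsin(\gamma s)$. A routine derivative check,
\begin{equation*}
f'(s) = \frac{1}{\sqrt{1-s^2}} - \frac{\gamma}{\sqrt{1-\gamma^2 s^2}} > 0
\quad\text{on } [0,1),
\end{equation*}
shows $f$ is strictly increasing because $\gamma<1$ makes both the numerator smaller and the denominator larger in the second term. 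Hence minimizing $\px-\pgx$ over ${\mathcal L}(\kappa)$ is equivalent to minimizing $s$, i.e., to minimizing $S$---the same task as for $\|\nabla\mu(x)\|\|x\|$.

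Finally, the core estimate is Temple-type. Since $\mu_{i+1}<\kappa<\mu_i$ and the eigenvalues are simple, the quadratic $q(\mu) := (\mu-\mu_i)(\mu-\mu_{i+1})$ is nonnegative at every $\mu_j$ and vanishes only when $j\in\{i,i+1\}$. Multiplying by $t_j \geq 0$ and summing gives
\begin{equation*}
0 \leq \sum_j t_j(\mu_j-\mu_i)(\mu_j-\mu_{i+1}) = S - (\mu_i+\mu_{i+1})\kappa + \mu_i\mu_{i+1},
\end{equation*}
so $S \geq \kappa(\mu_i+\mu_{i+1}) - \mu_i\mu_{i+1}$ with equality if and only if $t_j = 0$ for every $j \notin \{i,i+1\}$, that is, $x \in \spanl\{x_i,x_{i+1}\}$. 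This simultaneously produces sharp lower bounds on $\|\nabla\mu(x)\|\|x\|$ and on $\px - \pgx$, both attained precisely on the two-dimensional invariant subspace as claimed. I do not expect a serious obstacle here: the substance is the classical Temple estimate, and the monotonicity of $f$ is elementary; the strict inequality $\gamma<1$ is essential for the latter, and indeed at $\gamma=1$ the difference $\px-\pgx$ is identically zero and the statement becomes vacuous, which explains the hypothesis in the theorem.
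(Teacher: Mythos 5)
Your proof is correct and follows essentially the same route as the paper: both arguments reduce everything to the Temple inequality for $\|Bx-\kappa x\|/\|x\|$ (your coordinate sum $\sum_j t_j(\mu_j-\mu_i)(\mu_j-\mu_{i+1})\geq 0$ is just the paper's operator inequality $(B-\mu_i I)(B-\mu_{i+1}I)\geq 0$ written in the eigenbasis, with the same equality characterization) and to the strict monotonicity of $\arcsin(a)-\arcsin(\gamma a)$, which you verify by the derivative computation the paper leaves implicit. Your single parameter $S$ is a harmless repackaging of the paper's observation that $\sin^2\px$ and $\|Bx-\kappa x\|^2/\|x\|^2$ are minimized together.
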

\begin{proof} 
By definition of the gradient, 
$\|\nabla\mu(x)\|\|x\|=2\|Bx-\kappa x\|/\|x\|$ 
for $x\in{\mathcal  L}(\kappa)$.
The Temple inequality 
$\|Bx-\kappa x\|^2/\|x\|^2\geq(\mu_i-\kappa)(\kappa-\mu_{i+1})$ 
is equivalent 
to the operator inequality
$(B-\mu_i I)(B-\mu_{i+1}I)\geq0$, which evidently holds. 
The equality here 
is attained only for $x\in\spanl\{x_i,x_{i+1}\}.$

Finally, we turn our attention to the angles. 
For $x\in{\mathcal  L}(\kappa),$ the Pythagorean theorem
$\|Bx\|^2=\|\kappa x\|^2+\|Bx-\kappa x\|^2$ 
shows that  
\[a^2:=\frac{\|Bx-\kappa x\|^2}{\|Bx\|^2}
=\frac{\|Bx-\kappa x\|^2/\|x\|^2}{\kappa^2+\|Bx-\kappa x\|^2/\|x\|^2}
\in(0,1)\]
is minimized together with $\|Bx-\kappa x\|/\|x\|$. 
But for a fixed $\gamma\in(0,1)$ the function 
$\arcsin(a)-\arcsin(\gamma a)$ is strictly increasing
in $a\in(0,1)$ which proves the proposition for
$\px-\pgx=\arcsin(a)-\arcsin(\gamma a).$
\end{proof}

Now we are ready to show that the same subspace $\spanl\{x_i,x_{i+1}\}$
gives the smallest change in the Rayleigh quotient $\mu(w)-\kappa$. 
The proof is based on analyzing the negative normalized gradient flow of the Rayleigh quotient.
\begin{theorem}
\label{t.2Dred}
Under the assumptions of Theorems \ref{t.wcurve} and \ref{t.2Dgrad} we denote 
$\igk:=\{w: w\in\arg\min\mu(\cgx);\; x\in {\mathcal  L}(\kappa)\}$---the 
set of minimizers of the  Rayleigh quotient.
Then $\arg\min\mu(\igk)\in\spanl\{x_i,x_{i+1}\}.$
(See Figure \ref{f2}). 
\begin{figure}
\begin{center}
\includegraphics[totalheight=0.3\textheight]{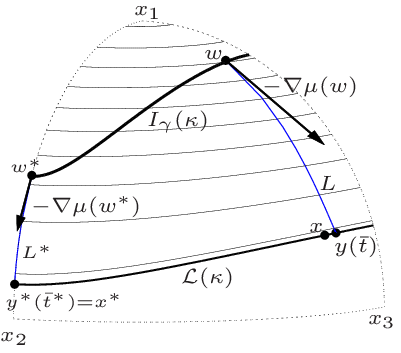}
\end{center}
\caption{\em The Rayleigh quotient gradient flow integration on the unit ball. 
}
\label{f2}
\end{figure}
\end{theorem}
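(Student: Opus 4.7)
The plan is to elevate the infinitesimal comparison of Theorem \ref{t.2Dgrad} to a finite statement by integrating the gradient flow of the Rayleigh quotient on the unit sphere. Since $\mu$, the cone $\cgx$, and the angles $\pgx,\px$ are all scale-invariant, I would normalize vectors to the unit sphere throughout the argument so that $\nabla\mu$ is tangent and has magnitude proportional to $\|Bx-\mu(x)x\|$.

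Given $x\in{\mathcal L}(\kappa)$ on the sphere, Theorem \ref{t.wcurve} pins down $w(x)$ as the cone boundary point determined by $(B+\alpha I)w=Bx$, and Theorem \ref{t.cone} guarantees $\mu(w(x))>\kappa$. I would represent the resulting gain as a line integral
\[
\mu(w(x))-\kappa = \int_{\mathrm{path}} \|\nabla\mu(y)\|\,d\ell
\]
along a gradient-flow curve on the sphere that connects the cone axis direction $Bx/\|Bx\|$ to $w(x)$. The arc length of this curve is governed by the aperture $\pgx$ (since $w$ lies at angular distance $\pgx$ from the axis), while the integrand is the spherical gradient norm. Theorem \ref{t.2Dgrad} says that both $\|\nabla\mu(x)\|\|x\|$ and $\px-\pgx$ attain their minima on ${\mathcal L}(\kappa)$ within $\spanl\{x_i,x_{i+1}\}$, so the quantity controlling the integral is simultaneously smallest there, producing the smallest value of $\mu(w(x))-\kappa$ in the same subspace.

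Two invariance facts make this lower bound actually achieved rather than merely a formal estimate. First, $\spanl\{x_i,x_{i+1}\}$ is $B$-invariant, so the gradient flow $\dot y = \nabla\mu(y)$ starting inside the subspace remains inside, and the integrated quantity in the 2D setting is a bona fide instance of the general formula. Second, by the last assertion of Theorem \ref{t.wcurve}, the map $x\mapsto w(x)$ preserves $\spanl\{x_i,x_{i+1}\}$, so the minimizing $x$ inside this subspace yields a minimizing $w$ inside it. Together these give $\arg\min\mu(\igk)\in\spanl\{x_i,x_{i+1}\}$.

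The main obstacle I anticipate is making the integration rigorous. The flow path traverses a continuum of level sets ${\mathcal L}(\kappa')$ with $\kappa\le\kappa'\le\mu(w(x))$, while Theorem \ref{t.2Dgrad} is a pointwise, level-set-by-level-set statement. One needs a comparison principle roughly saying that the flow launched from an arbitrary $x\in{\mathcal L}(\kappa)$ dominates, at each intermediate ${\mathcal L}(\kappa')$, the flow launched from the 2D-subspace representative sharing the same $\mu$-value, so that pointwise minima accumulate to a minimum of the integral. Reconciling the spherical arc length along the flow with the straight-line angular aperture $\pgx$ of the cone — two a priori different geometric quantities — is the central technical difficulty, and likely needs the explicit identity $w=(B+\alpha I)^{-1}Bx$ from Theorem \ref{t.wcurve} to link them.
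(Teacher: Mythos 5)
Your overall strategy---normalize to the unit sphere, run a gradient flow of the Rayleigh quotient, write the gain in $\mu$ as an integral of $\|\nabla\mu\|$, and feed both conclusions of Theorem \ref{t.2Dgrad} plus the invariance statements of Theorem \ref{t.wcurve} into that integral---is the same as the paper's. But the way you set up the integral does not work. There is in general no gradient-flow trajectory joining the axis direction $Bx/\|Bx\|$ to $w$, and along any curve between those two points the integral of $\|\nabla\mu\|$ controls $|\mu(w)-\mu(Bx/\|Bx\|)|$, not $\mu(w)-\kappa$ (note $\mu(Bx)>\kappa$ since $x$ is not an eigenvector). The correct construction runs the \emph{descent} flow $y'=-\nabla\mu(y)/\|\nabla\mu(y)\|$ from $y(0)=w$ and stops at the first time $\bar t$ with $\mu(y(\bar t))=\kappa$; the endpoint $y(\bar t)$ is neither $x$ nor the cone axis. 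Normalizing the flow makes elapsed time equal arc length, which is what lets angles enter at all. Moreover the length is not ``governed by the aperture $\pgx$'': the relevant quantity is the gap $\px-\pgx$. One gets $\mathrm{Length}(L)\geq\angle\{y(0),y(\bar t)\}\geq\px-\pgx$ because $y(0)=w\in\partial\cgx$ while $y(\bar t)$, having Rayleigh quotient $\kappa$, cannot lie in the interior of the larger cone $\cx$ (every interior point of $\cx$ has $\mu>\kappa$ by the Cauchy--Schwarz argument in Theorem \ref{t.cone}); in the two-dimensional case the path is exactly the arc of length $\phi_1(x^*)-\phi_\gamma(x^*)$, and Theorem \ref{t.2Dgrad} then gives $\bar t^*\leq\bar t$.

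The step you defer as the ``main obstacle'' is in fact the substance of the proof, and the heuristic ``smallest integrand times smallest length gives smallest integral'' cannot be made rigorous as stated: if you parametrize the integral by the level $\kappa'$, it collapses identically to $\mu(w)-\kappa$, so a pointwise, level-by-level minimality of the integrand yields nothing by itself. The paper closes the argument by comparing the generic flow with the two-dimensional flow at \emph{matched Rayleigh-quotient levels} (Theorem \ref{t.2Dgrad} applies to every intermediate level, which stays in $(\mu_{i+1},\mu_i)$, and on the 2D level sets the gradient norm actually equals the minimum), combining this with the time bound $\bar t^*\leq\bar t$ from the angle estimate, and invoking the integration-of-inverse-functions lemma (Theorem \ref{t:iif}) to conclude $\mu(w^*)=\mu(y^*(0))\leq\mu(y(\bar t-\bar t^*))\leq\mu(y(0))=\mu(w)$. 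Note also that the explicit identity $w=(B+\alpha I)^{-1}Bx$, which you expect to need for reconciling arc length with the cone aperture, plays no role here; what is needed instead is the cone geometry just described together with the comparison lemma.
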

\begin{proof}
The initial value problem for a gradient flow of the Rayleigh quotient,
\begin{equation}\label{e:gf}
y'(t)=-\frac{
\nabla\mu(y(t))}{\|\nabla\mu(y(t))\|}
,\, t\geq 0,\, y(0)=w\in\igk,
\end{equation}
has the vector-valued solution $y(t),$ which preserves 
the norm of the initial vector $w$ since $d\|y(t)\|^2/dt=2(y(t),y'(t))=0$
as $(y,\nabla\mu(y))=0$. 
Without loss of generality we assume $\|w\|=1=\|y(t)\|.$
The Rayleigh quotient 
function $\mu(y(t))$ is decreasing since
\[
     \frac{d}{dt} \mu(y(t))  = \left(\nabla \mu(y(t)), y'(t)\right)
    = \left(\nabla \mu(y(t)),
-         \frac{
\nabla\mu(y(t)) }{\|\nabla\mu(y(t))\|}
\right)
    =-\|\nabla \mu(y(t))\|\leq0.
\]
As $\mu(y(0))=\mu(w)<\mu_i$, the function $\mu(y(t))$ is strictly decreasing 
at least until it reaches $\kappa>\mu_{i+1}$ as there are no 
eigenvalues in the interval $[\kappa,\mu(y(0))]\subset(\mu_{i+1},\mu_i),$
but only eigenvectors can be special points of ODE \eqref{e:gf}. 
The condition $\mu(y(\bar t))=\kappa$ thus uniquely determines $\bar t$
for a given initial value $w$.
The absolute value of the decrease
of the Rayleigh quotient along the path $L:=\{y(t),\,0\leq t\leq\bar t\}$ is 
\[ 
\mu(w)-\kappa=\mu(y(0))-\mu(y(\bar t))
     =\int_0^{\bar t} \|\nabla \mu(y(t))\| dt >0. 
\] 
Our continuation method \eqref{e:gf} using the \emph{normalized} gradient flow is nonstandard, 
but its advantage is that it gives the following simple expression for the length of $L$, 
$ \mathrm{Length}(L)=\int_{0}^{\bar t} \|y'(t)\| dt=
   \int_{0}^{\bar t} 1 dt =\bar t.$

Since the initial value $w$ is determined by $x$, 
we compare a generic $x$ with the special choice $x=x^*\in\spanl\{x_i,x_{i+1}\}$,
using the superscript $*$ to denote all quantities corresponding to the 
choice  $x=x^*$. By Theorem~\ref{t.wcurve} 
$x^*\in\spanl\{x_i,x_{i+1}\}$ implies $w^*\in\spanl\{x_i,x_{i+1}\}$, so
we have $y^*(t)\in\spanl\{x_i,x_{i+1}\},$ $0\leq t\leq \bar t^*$ as 
$\spanl\{x_i,x_{i+1}\}$ is an invariant subspace for 
the gradient of the Rayleigh quotient. 
At the end points,  
$\mu(y(\bar t))=\kappa=\mu(x)=\mu(y^*(\bar t^*)),$
by their definition. Our goal is to bound the initial value  $\mu(w^*)=\mu(y^*(0))$ by 
$\mu(w)=\mu(y(0))$, 
so we compare the lengths of the corresponding paths $L^*$ and $L$
and the norms of the gradients along these paths. 

We start with the lengths. 
We obtain $\phi_1(x^*)-\phi_\gamma(x^*)\leq\px-\pgx$ by Theorem~\ref{t.2Dgrad}.
Here the angle $\px-\pgx$ is the smallest angle between any two vectors on the cones 
boundaries  $\partial\cgx$ and $\partial\cx$. 
Thus,  $\px-\pgx\leq\angle\{y(0),y(\bar t)\}$ 
as our one vector $y(0)=w\in\partial\cgx$ 
by Theorem~\ref{t.wcurve}, while the other vector $y(\bar t)$
cannot be inside the cone $\cgx$ 
since $\mu(w)>\kappa=\mu(y(\bar t))$ by Theorem~\ref{t.cone}. 
As $y(t)$ is a unit vector, 
$\angle\{y(0),y(\bar t)\}\leq\mathrm{Length}(L)=\bar t$ 
as the angle is the length of the arc---the shortest
curve from $y(0)$ to $y(\bar t)$ on the unit ball. 

For our  special $*$-choice, inequalities from the previous 
paragraph turn into equalities, 
as  $y^*(t)$ is in the intersection of the unit ball
and the subspace $\spanl\{x_i,x_{i+1}\}$, so the 
path $L^*$ is the arc between $y^*(0)$ to $y^*(\bar t^*)$ itself.  
Combining everything together, 
\begin{align*}
\bar t^*=
\mathrm{Length}(L^*)=\angle\{y^*(0),y^*(\bar t^*)\}=
\angle\{w^*,x^*\}=\varphi_1(x^*)-\varphi_\gamma(x^*)\\
\leq
\varphi_1(x)-\varphi_\gamma(x)
\leq
\angle\{y(0),y(\bar t)\}
\leq
\mathrm{Length}(L)=\bar t.
\end{align*}

By Theorem \ref{t.2Dgrad} on 
the  norms of the gradient, 
$-\|\nabla \mu(y^*(t^*))\|\geq-\|\nabla \mu(y(t))\|$
 for each pair of independent variables $t^*$ and $t$ such that
$\mu(y^*(t^*))=\mu(y(t)).$
Using Theorem \ref{t:iif}, we conclude that 
$\mu(w^*)=\mu(y^*(0))\leq\mu(y(\bar t-\bar t^*))\leq \mu(y(0))=\mu(w)$ 
as $\bar t-\bar t^*\geq0$,
i.e.,\ the subspace $\spanl\{x_i,x_{i+1}\}$ gives the smallest value $\mu(w).$ 
\end{proof}

By Theorem~\ref{t.2Dred} the poorest convergence is attained with 
$x\in\spanl\{x_i,x_{i+1}\}$ and with the corresponding minimizer $w\in\spanl\{x_i,x_{i+1}\}$ 
described in Theorem \ref{t.wcurve}, so finally our analysis is now reduced 
to the two-dimensional space $\spanl\{x_i,x_{i+1}\}$.
\begin{theorem}\label{t.2D}
Bound \eqref{e.muestn} holds and is sharp for $x\in\spanl\{x_i,x_{i+1}\}$.
\end{theorem}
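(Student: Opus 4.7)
The plan is to push the entire argument down to a two-dimensional calculation in $\spanl\{x_i,x_{i+1}\}$, into which Theorems \ref{t.wcurve} and \ref{t.2Dred} have already placed both $x$ and $w$. Writing $x = c_i x_i + c_{i+1}x_{i+1}$ with $c_i,c_{i+1}>0$, a direct Rayleigh quotient computation gives
\[
\frac{\mu_i-\mu(x)}{\mu(x)-\mu_{i+1}}=\frac{c_{i+1}^2}{c_i^2}
\]
and the analogous identity for $w$. The shifted-inverse relation $(B+\alpha I)w=Bx$ from Theorem \ref{t.wcurve} yields $w_{i+1}/w_i = r(\alpha)\, c_{i+1}/c_i$ with
\[
r(\alpha):=\frac{\mu_{i+1}(\mu_i+\alpha)}{\mu_i(\mu_{i+1}+\alpha)},
\]
so \eqref{e.muestn} reduces to the one-parameter inequality $r(\alpha)\leq\sigma$ for the value of $\alpha$ selected by the cone boundary condition $w\in\partial\cgx$.

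Next I would exploit that boundary condition $\sin^2\angle\{w,Bx\}=\gamma^2\sin^2\px$. Using the planar cross-product identity $\|u\|^2\|v\|^2-(u,v)^2=(u_1v_2-u_2v_1)^2$ on both sides and clearing denominators yields a polynomial relation that factors as a sum of two differences of squares,
\[
\mu_i^2c_i^2\,A_1P_1+\mu_{i+1}^2c_{i+1}^2\,A_2P_2=0,
\]
with $A_1=\gamma\alpha-(1-\gamma)\mu_{i+1}$, $A_2=\gamma\alpha-(1-\gamma)\mu_i$, and strictly positive factors $P_1=\gamma(\mu_{i+1}+\alpha)+\mu_{i+1}$, $P_2=\gamma(\mu_i+\alpha)+\mu_i$. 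A preliminary geometric check that $\theta_B+\pgx<\pi/2$ (where $\theta_B=\angle\{Bx,x_i\}$) locates the minimizer in the open first quadrant, giving $\alpha>-\mu_{i+1}$ and thus $P_1,P_2>0$. Since $\mu_i>\mu_{i+1}$ forces $A_1>A_2$, the hypothesis $A_1\leq0$ would make both terms non-positive with strict negativity in the second, contradicting the equation. Hence $A_1>0$, i.e.\ $\alpha>\mu_{i+1}(1-\gamma)/\gamma$, and a one-line algebraic rearrangement converts this into $r(\alpha)<\sigma$, establishing the bound.

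Sharpness comes from letting $c_{i+1}/c_i\to 0$, i.e.\ $x\to x_i$: the second term in the factored equation vanishes, forcing $A_1\to0$, hence $\alpha\to\mu_{i+1}(1-\gamma)/\gamma$ and $r(\alpha)\to\sigma$. To transfer this back to the original iterate $x'$ one constructs a symmetric operator $I-T$ of operator norm exactly $\gamma$ sending $Bx-\mu(x)x$ to $Bx-\mu(x)w$; this is possible precisely because the target has norm $\gamma\|Bx-\mu(x)x\|$ on $\partial\cgx$, and it yields $x'=w$ in \eqref{e.ep}. Because these extremal witnesses are real, the same sharpness holds in the complex case, resolving the issue flagged at the end of the proof of Theorem \ref{t.real}. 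The main obstacle I anticipate is the algebraic bookkeeping leading to the factored equation; once the factorization is in hand, the remainder reduces to a one-variable inequality with essentially no content beyond what is displayed above.
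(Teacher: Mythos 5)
Your proposal is correct and follows the same overall route as the paper: reduce to the plane $\spanl\{x_i,x_{i+1}\}$, encode the minimizer through the shifted relation $(B+\alpha I)w=Bx$, pin down $\alpha$ from the cone-boundary condition $w\in\partial\cgx$, observe that the ratio of ratios equals $r(\alpha)^2$ with $r$ decreasing, and obtain sharpness in the limit $x\to x_i$ together with a Householder-based construction of $T$ (real witnesses covering the complex case). Where you differ is in how the boundary equation is exploited: the paper turns it into a quadratic $a\alpha^2+b\alpha+c=0$, shows the two roots have opposite signs, identifies the positive root as the minimizer by re-running the $\beta>0$ gradient-direction argument of Theorem \ref{t.wcurve}, and then uses monotonicity of the coefficients in $\kappa$ to conclude $\alpha\geq\mu_{i+1}(1-\gamma)/\gamma$; you instead factor the same relation as $\mu_i^2c_i^2A_1P_1+\mu_{i+1}^2c_{i+1}^2A_2P_2=0$ (I checked the factorization -- it is exactly right) and read off $A_1>0$, i.e.\ $\alpha>\mu_{i+1}(1-\gamma)/\gamma$, directly from signs, which is arguably cleaner since it avoids the min/max root discrimination and the monotonicity-in-$\kappa$ step. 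The price is that you need $P_1>0$, hence $\alpha>-\mu_{i+1}$, which the paper never needs; your ``preliminary geometric check'' does work, but as written it is under-justified: $\theta_B-\pgx$ can be negative, so the cone itself may leave the first quadrant, and you must add the (easy, two-dimensional) observation that $\mu$ is monotone in the angle from $x_i$, so the minimizer sits on the cone edge at angle $\theta_B+\pgx<\pi/2$, whence both components of the normalized $w$ are positive. Two further small points to make explicit in a full write-up: the passage from $\mu(w)$ to $\mu(x')$ in \eqref{e.muestn} uses $\mu(x')\geq\mu(w)$ from Theorem \ref{t.cone} and the monotonicity of $\mu\mapsto(\mu_i-\mu)/(\mu-\mu_{i+1})$; and in the extremal construction the scaling of $w$ must be chosen so that $\mu(x)w$ lies on the Thales sphere $(y,Bx-y)=0$, which is exactly what gives the target vector the norm $\gamma\|Bx-\mu(x)x\|$ needed for the norm-$\gamma$ symmetric map (the paper's ``properly scaled $y$'').
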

\begin{proof}
Assuming $\|x\|=1$ and $\|x_i\|=\|x_{i+1}\|=1$, we derive 
\begin{equation}
   \label{e.xixip}
 |(x,x_i)|^2=\frac{\mu(x)-\mu_{i+1}}{\mu_i-\mu_{i+1}}>0
  \mbox{ and }
  |(x,x_{i+1})|^2=\frac{\mu_i-\mu(x)}{\mu_i-\mu_{i+1}},
\end{equation}
and similarly for $w\in\spanl\{x_i,x_{i+1}\}$ where $(B+\alpha I)w=Bx$.

Since $B>0$, we have $x=(I+\alpha B^{-1})w$. 
Assuming $\alpha=-\mu_{i+1}$, this identity implies 
$x=x_i,$ which contradicts our assumption that 
$x$ is not an eigenvector. 
For $\alpha\neq-\mu_{i+1}$ and $\alpha>-\mu_i$ by Theorem~\ref{t.wcurve},
the inverse $(B+\alpha I)^{-1}$ exists. 

Next we prove that $\alpha>0$ and that it  
is a strictly decreasing function of $\kappa:=\mu(x)\in(\mu_{i+1},\mu_i).$
Indeed, using $Bx=(B+\alpha I)w$ and our cosine-based definition of the angles, we have 
$0<(w,(B+\alpha I)w)^2=(w,Bx)^2=\|w\|^2\|Bx\|^2\cos^2\pgx,$ where 
$\|Bx\|^2\cos^2\pgx=\|Bx\|^2-\gamma^2 \|Bx-\kappa x\|^2.$
We substitute $w=(B+\alpha I)^{-1}Bx$, which gives  
$((B+\alpha I)^{-1}Bx,Bx)^2=\|(B+\alpha I)^{-1}Bx\|^2
\left(\|Bx\|^2-\gamma^2 \|Bx-\kappa x\|^2\right).$
Using \eqref{e.xixip}, multiplication by $(\mu_i+\alpha)^2(\mu_{i+1}+\alpha)^2$
leads to a simple quadratic equation,
$a\alpha^2+b\alpha+c=0,\,a=\gamma^2(\kappa(\mu_i+\mu_{i+1})-\mu_i\mu_{i+1}),\,
b=2\gamma^2\kappa\mu_i\mu_{i+1},\,c= -(1-\gamma^2)\mu_i^2\mu_{i+1}^2$
for $\alpha.$ As $a>0$, $b>0$, and $c<0$, the discriminant is
positive and the two solutions for $\alpha$, 
corresponding to the minimum and maximum of the Rayleigh quotient on $\cgx$, 
have different signs. The proof of Theorem \ref{t.wcurve} analyzes 
the direction of the gradient of the Rayleigh quotient to conclude that $\beta>0$ and 
$\alpha>-\mu(w)$ correspond to the minimum. Repeating the same arguments 
with $\beta<0$ shows that  
$\alpha<-\mu(w)$ corresponds to the maximum. But $\mu(w)>0$ since $B>0$, hence 
the negative $\alpha$ corresponds to the maximum and thus 
the positive $\alpha$ corresponds to the minimum. 
We observe that the coefficients $a>0$ and $b>0$ 
are evidently increasing functions of $\kappa\in(\mu_{i+1},\mu_{i})$, 
while $c<0$ does not depend on $\kappa$. 
Thus $\alpha>0$ is strictly decreasing in $\kappa$, 
and taking $\kappa\to\mu_i$ gives the smallest  
$\alpha=\mu_{i+1}(1-\gamma)/\gamma>0.$ 

Since $(B+\alpha I)w=Bx$ where now $\alpha>0$, condition 
$(x,x_{i})\neq0$ implies $(w,x_{i})\neq0$ and 
$(x,x_{i+1})=0$ implies $(w,x_{i+1})=0,$
so we introduce the convergence factor as 
\[
\sigma^2(\alpha):=
\frac{\mu_i-\mu(w)}{\mu(w)-\mu_{i+1}}
\frac{\mu(x)-\mu_{i+1}}{\mu_i-\mu(x)}
=
\left|\frac{(w,x_{i+1})}{(w,x_{i})}\right|^2
\left|\frac{(x,x_{i})}{(x,x_{i+1})}\right|^2
=
\left(\frac{\mu_{i+1}}{\mu_i}\frac{\mu_i+\alpha}{\mu_{i+1}+\alpha}\right)^2, 
\]
where we use \eqref{e.xixip} and  again $(B+\alpha I)w=Bx$. 
We notice that $\sigma(\alpha)$ is a strictly decreasing function of $\alpha>0$ 
and thus takes its largest value for $\alpha=\mu_{i+1}(1-\gamma)/\gamma$
giving $\sigma=\gamma+(1-\gamma){\mu_{i+1}}/{\mu_i},$ i.e.,\ 
bound \eqref{e.muestn} that we are seeking.

The convergence factor $\sigma^2(\alpha)$ cannot be improved 
without introducing extra terms or assumptions. 
But $\sigma^2(\alpha)$ deals with $w\in\cgx$, not with the actual  
iterate $x'$. We now show that for 
$\kappa\in(\mu_{i+1},\mu_i)$ there exist a vector 
$x\in\spanl\{x_i,x_{i+1}\}$ and 
a preconditioner $T$ satisfying \eqref{e.ass} such that
$\kappa=\mu(x)$ and $x'\in\spanl\{w\}$ in both real and complex cases. 
In the complex case, let us choose $x$ such that  $\mu(x)=\kappa$ and $x=|x|$ according to 
\eqref{e.xixip}, 
then the real vector $w=|w|\in\cgx$ is a minimizer 
of the Rayleigh quotient on $\cgx$, since 
 $\mu(w)=\mu(|w|)$ and $|(w,B|x|)|\leq(|w|,B|x|)$. 

Finally, for a real $x$ with  $\mu(x)=\kappa$ and 
a real properly scaled $y\in\cgx$ 
there is a real matrix $T$ satisfying \eqref{e.ass} such that
$y=Bx-(I-T)(Bx-\kappa x)$, which leads to \eqref{e.ep} with $\mu(x)x'=y.$ 
Indeed, for the chosen $x$ we scale $y\in\cgx$ such that 
$(y,Bx-y)=0$ so $\|Bx-y\|=\sin\pgx\|Bx\|=\gamma\|Bx-\kappa x\|$. As vectors 
$Bx-y$ and $\gamma(Bx-\kappa x)$ are real and have the same length there 
exists a \emph{real} Householder reflection $H$ such that $Bx-y=H\gamma(Bx-\kappa x)$. 
Setting $T=I-\gamma H$ we obtain the required identity. Any Householder 
reflection is symmetric and has only two distinct eigenvalues $\pm1$, so 
we conclude that $T$ is real symmetric (and thus Hermitian in the complex case) 
and satisfies \eqref{e.ass}.
\end{proof}
\section{Appendix}
\label{secA}
The integration of inverse functions theorem follows.
\begin{theorem}\label{t:iif}
Let $f,\: g:[0,b]\to{\bf R}$ for $b>0$ be strictly monotone increasing smooth
functions and suppose that for $a\in[0,b]$ we have $f(a)=g(b)$. If for all
$\alpha,\:\beta\in[0,b]$ with $f(\alpha)=g(\beta)$ the derivatives satisfy
$f'(\alpha) \leq g'(\beta),$
then for any $\xi\in[0,a]$ we have $f(a-\xi) \geq g(b-\xi).$
\end{theorem}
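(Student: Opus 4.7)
The plan is to introduce the composite function $h(\alpha) := g^{-1}(f(\alpha))$, reformulate the hypothesis as a $1$-Lipschitz bound on $h$, and then translate back. The target inequality $f(a-\xi) \geq g(b-\xi)$ is equivalent, via the strictly increasing map $g^{-1}$, to $h(a-\xi) \geq b-\xi$; since $h(a) = b$ by the boundary condition $f(a)=g(b)$, this in turn is equivalent to the Lipschitz-type estimate $h(\alpha) \geq b-(a-\alpha)$ on $[0,a]$.

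First I would compute, using the chain rule and the identity $g(h(\alpha))=f(\alpha)$, that $h'(\alpha) = f'(\alpha)/g'(h(\alpha))$ wherever $h$ is defined. Writing $\beta = h(\alpha)$, so that $f(\alpha)=g(\beta)$, the theorem's hypothesis gives $f'(\alpha)\leq g'(\beta)$, and therefore $h'(\alpha)\leq 1$. The mean value theorem then yields $h(\alpha)\geq h(a)-(a-\alpha) = b-(a-\alpha)$ on the domain of $h$, which after the substitution $\alpha = a-\xi$ and application of the increasing function $g$ is exactly the conclusion.

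The main obstacle is that $h$ need not a priori be defined on all of $[0,a]$: it is defined only where $f(\alpha)$ lies in the range $[g(0),g(b)]$ of $g$. The upper bound $f(\alpha)\leq f(a)=g(b)$ is automatic, so the only concern is whether $f(\alpha)\geq g(0)$ throughout $[0,a]$. I would close this gap by a boundary argument: let $\alpha_{\ast}$ denote the infimum of those $\alpha\in[0,a]$ for which $h$ is defined; if $\alpha_{\ast}>0$, then continuity forces $f(\alpha_{\ast})=g(0)$ and so $h(\alpha_{\ast})=0$. Applying the Lipschitz bound just derived on $[\alpha_{\ast},a]$ gives $0 = h(\alpha_{\ast})\geq b-(a-\alpha_{\ast})$, i.e.\ $\alpha_{\ast}\leq a-b$; since $a\in[0,b]$ forces $a-b\leq 0$, this contradicts $\alpha_{\ast}>0$. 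Hence $\alpha_{\ast} = 0$, $h$ is defined on the entire interval $[0,a]$, and the Lipschitz bound then delivers $f(a-\xi)\geq g(b-\xi)$ for every $\xi\in[0,a]$.
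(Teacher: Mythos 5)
Your proof is correct, and it takes a genuinely different (though closely related) route from the paper's. The paper argues by an integral comparison on the inverse functions: it writes $\xi=\int_{g(b-\xi)}^{g(b)}\left(g^{-1}\right)'(y)\,dy=\int_{f(a-\xi)}^{g(b)}\left(f^{-1}\right)'(y)\,dy$, translates the hypothesis into the pointwise bound $\left(g^{-1}\right)'(y)\le\left(f^{-1}\right)'(y)$ for $y=f(\alpha)=g(\beta)$, and then compares the lower limits of two equal integrals with positive, comparable integrands. You instead differentiate the composite $h=g^{-1}\circ f$, observe that the hypothesis says precisely $h'\le 1$, and conclude by the mean value theorem; this is in effect the differential counterpart of the paper's integral argument, since $h'(\alpha)=f'(\alpha)\left(g^{-1}\right)'(f(\alpha))\le 1$ is the same inequality in composite form. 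What your version buys: it avoids the integral manipulations, and it makes explicit a domain question the paper glosses over, namely whether $f(\alpha)\ge g(0)$ throughout $[0,a]$ so that $g^{-1}(f(\alpha))$ is defined; your boundary argument (if $\alpha_*>0$ then $0=h(\alpha_*)\ge b-(a-\alpha_*)$, forcing $\alpha_*\le a-b\le 0$) settles this cleanly. Both proofs share the same implicit reading of ``strictly monotone increasing smooth'' as giving positive derivatives, so that $h'=f'/(g'\circ h)$ in your case, and $\left(f^{-1}\right)'$, $\left(g^{-1}\right)'$ in the paper's, exist; neither handles possible isolated zeros of $f'$ or $g'$, so you are no less rigorous than the original on that point.
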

\begin{proof}
For any $\xi\in[0,a]$ we have (using $f(a)=g(b)$)
\[
  \xi=\int_{g(b-\xi)}^{g(b)} \left(g^{-1}\right)'\!(y) \:dy =
       \int_{f(a-\xi)}^{g(b)} \left(f^{-1}\right)'\!(y) \:dy.
\]
If $y=f(\alpha)=g(\beta)$, then for the derivatives of the inverse functions
it holds that
$
  \left(g^{-1}\right)'\!(y) \leq \left(f^{-1}\right)'\!(y).
$
Since $f$ and $g$ are strictly monotone increasing functions the integrands
are positive functions and  $g(b-\xi)<g(b)$ as well as
$f(a-\xi)<f(a)=g(b)$. Comparing the lower
limits of the integrals gives the statement of the theorem. 
\end{proof}
\section*{Conclusions}
\label{secC}
We present a new geometric
approach to the convergence analysis of
a preconditioned fixed-step gradient eigensolver 
which reduces the derivation of the convergence rate bound 
to a two-dimensional case. 
The main novelty is in the use of a continuation
method for the gradient flow of the Rayleigh quotient 
to locate the two-dimensional subspace corresponding to the 
smallest change in the Rayleigh quotient and 
thus to the slowest convergence of the gradient eigensolver. 

An elegant and important result such as Theorem \ref{t.1} should ideally have 
a textbook-level proof. We have been trying, unsuccessfully, to find such a proof  
for several years, so its existence remains an open problem.  
\section*{Acknowledgments}
We thank M. Zhou of University of Rostock, Germany for proofreading. 
M. Argentati of University of Colorado Denver, 
E. Ovtchinnikov of University of Westminster, 
and anonymous referees have made numerous great suggestions 
to improve the paper and for future work. 


\def\refname{\centerline{\footnotesize\rm REFERENCES}}

\begin{thebibliography}{11}

\bibitem
{bkp96}
{\sc J.~H. Bramble, J.~E. Pasciak, and A.~V. Knyazev},
 {\em  A subspace preconditioning algorithm for eigenvector/eigenvalue
  computation},
{Adv. Comput. Math.}, 6 (1996), pp. 159--189.

\bibitem
{d}
{\sc E.~G. D'yakonov},
\emph{Optimization in solving elliptic problems},
CRC Press, 
1996.

\bibitem
{KNY1986}
{\sc A.~V. Knyazev},
 {\it Computation of eigenvalues and eigenvectors for mesh problems:
  algorithms and error estimates},
{(In Russian)}, Dept. Num. Math., USSR Ac. Sci.,  Moscow, 1986.

\bibitem
{KNY1987}
{\sc A.~V. Knyazev},
 {\em Convergence rate estimates for iterative methods for a mesh symmetric
  eigenvalue problem},
{Russian J. Numer. Anal. Math. Modelling}, 2 (1987), pp. 371--396.

\bibitem
{KNY1998}
{\sc A.~V. Knyazev},
{\em  Preconditioned eigensolvers---an oxymoron?},
{Electron. Trans. Numer. Anal.}, 7 (1998), pp. 104--123.

\bibitem
{k99}
{\sc A.~V. Knyazev},
{\it Preconditioned eigensolvers: practical algorithms},
In Z.~Bai, J.~Demmel, J.~Dongarra, A.~Ruhe, and H.~van~der Vorst,
  editors, {Templates for the Solution of Algebraic Eigenvalue Problems: A
  Practical Guide}, pp. 352--368. SIAM, Philadelphia, 2000.

\bibitem
{k00}
{\sc A.~V. Knyazev},
{\em  Toward the optimal preconditioned eigensolver: Locally optimal block
  preconditioned conjugate gradient method},
SIAM J. Sci. Comput., 
23 (2001), pp. 517--541.

\bibitem
{KNN2003}
{\sc A.~V. Knyazev and K.~Neymeyr},
{\em A geometric theory for preconditioned inverse iteration. III: A
  short and sharp convergence estimate for generalized eigenvalue problems},
{Linear Algebra Appl.}, 358 (2003), pp. 95--114.

\bibitem
{NEY2001a}
{\sc K.~Neymeyr},
{\em A geometric theory for preconditioned inverse iteration. I: Extrema
  of the Rayleigh quotient},
{Linear Algebra Appl.}, 322 (2001), pp. 61--85.

\bibitem
{NEY2001b}
{\sc K.~Neymeyr},
{\em A geometric theory for preconditioned inverse iteration. II:
  Convergence estimates},
{Linear Algebra Appl.}, 322 (2001), pp. 87--104.

\end{thebibliography}

\end{document}